\numberwithin{equation}{section}
\theoremstyle{definition}
\newtheorem{Definition}{Definition}[section]
\newtheorem{Remark}[Definition]{Remark}
\theoremstyle{plain}
\newtheorem{Theorem}[Definition]{Theorem}
\newcommand{\R}{\mathbb R}
\newcommand{\Ric}{\mathrm{Ric}}
\newcommand{\enumlabelformat}{\roman}
\newcommand{\enumlabelfont}[1]{#1}
\newlength{\thelabelsep}
\setlist{labelsep=\thelabelsep}
\setlist[enumerate,1]{font=\enumlabelfont,label=(\enumlabelformat*),leftmargin=2.5em}
\setlist[itemize]{leftmargin=2.5em,label=$-$}
\newcounter{inlineenum}
\renewcommand{\theinlineenum}{\enumlabelformat{inlineenum}}
\let\epsilon\varepsilon
\let\phi\varphi
\def\sigdis#1{|#1|_{\pm}}
\newcommand{\ma}{\measuredangle}
\newcommand{\lm}[1]{\mathbb{L}^2(#1)}
\let\save@mathaccent\mathaccent
\newcommand*\if@single[3]{%
  \setbox0\hbox{${\mathaccent"0362{#1}}^H$}%
  \setbox2\hbox{${\mathaccent"0362{\kern0pt#1}}^H$}%
  \ifdim\ht0=\ht2 #3\else #2\fi
  }
\newcommand*\rel@kern[1]{\kern#1\dimexpr\macc@kerna}
\newcommand*\widebar[1]{\@ifnextchar^{{\wide@bar{#1}{0}}}{\wide@bar{#1}{1}}}
\newcommand*\wide@bar[2]{\if@single{#1}{\wide@bar@{#1}{#2}{1}}{\wide@bar@{#1}{#2}{2}}}
\newcommand*\wide@bar@[3]{%
  \begingroup
  \def\mathaccent##1##2{%
%Enable nesting of accents:
    \let\mathaccent\save@mathaccent
%If there's more than a single symbol, use the first character instead (see below):
    \if#32 \let\macc@nucleus\first@char \fi
%Determine the italic correction:
    \setbox\z@\hbox{$\macc@style{\macc@nucleus}_{}$}%
    \setbox\tw@\hbox{$\macc@style{\macc@nucleus}{}_{}$}%
    \dimen@\wd\tw@
    \advance\dimen@-\wd\z@
%Now \dimen@ is the italic correction of the symbol.
    \divide\dimen@ 3
    \@tempdima\wd\tw@
    \advance\@tempdima-\scriptspace
%Now \@tempdima is the width of the symbol.
    \divide\@tempdima 10
    \advance\dimen@-\@tempdima
%Now \dimen@ = (italic correction / 3) - (Breite / 10)
    \ifdim\dimen@>\z@ \dimen@0pt\fi
%The bar will be shortened in the case \dimen@<0 !
    \rel@kern{0.6}\kern-\dimen@
    \if#31
      \overline{\rel@kern{-0.6}\kern\dimen@\macc@nucleus\rel@kern{0.4}\kern\dimen@}%
      \advance\dimen@0.4\dimexpr\macc@kerna
%Place the combined final kern (-\dimen@) if it is >0 or if a superscript follows:
      \let\final@kern#2%
      \ifdim\dimen@<\z@ \let\final@kern1\fi
      \if\final@kern1 \kern-\dimen@\fi
    \else
      \overline{\rel@kern{-0.6}\kern\dimen@#1}%
    \fi
  }%
  \macc@depth\@ne
  \let\math@bgroup\@empty \let\math@egroup\macc@set@skewchar
  \mathsurround\z@ \frozen@everymath{\mathgroup\macc@group\relax}%
  \macc@set@skewchar\relax
  \let\mathaccentV\macc@nested@a
%The following initialises \macc@kerna and calls \mathaccent:
  \if#31
    \macc@nested@a\relax111{#1}%
  \else
%If the argument consists of more than one symbol, and if the first token is
%a letter, use that letter for the computations:
    \def\gobble@till@marker##1\endmarker{}%
    \futurelet\first@char\gobble@till@marker#1\endmarker
    \ifcat\noexpand\first@char A\else
      \def\first@char{}%
    \fi
    \macc@nested@a\relax111{\first@char}%
  \fi
  \endgroup
}
\newcommand*{\md}{\mathrm{md}}
\title{The equivalence of smooth and synthetic notions of timelike sectional curvature bounds}
\author{Tobias Beran\thanks{{\tt tobias.beran@univie.ac.at}, Faculty of Mathematics, University of Vienna, Austria.}\\Michael Kunzinger\thanks{{\tt michael.kunzinger@univie.ac.at}, Faculty of Mathematics, University of Vienna, Austria.}\\Argam Ohanyan\thanks{{\tt argam.ohanyan@univie.ac.at}, Faculty of Mathematics, University of Vienna, Austria.}\\Felix Rott\thanks{{\tt frott@sissa.it}, SISSA, Trieste, Italy.}
}
\begin{document}

\date{\today}

%\date{Received: date /Accepted: date}

\maketitle

\begin{abstract}

Timelike sectional curvature bounds play an important role in spacetime geometry, both for the understanding of classical smooth spacetimes and for the study of Lorentzian (pre-)length spaces introduced in \cite{kunzinger2018lorentzian}. In the smooth setting, a bound on the sectional curvature of timelike planes can be formulated via the Riemann curvature tensor. In the synthetic setting, bounds are formulated by comparing various geometric configurations to the corresponding ones in constant curvature spaces. The first link between these notions in the Lorentzian context was established in \cite{harris1982triangle}, which was instrumental in the proof of powerful results in spacetime geometry \cite{beem1985toponogov, beem1985decomposition, galloway2018existence}. For general semi-Riemannian manifolds, the equivalence between sectional curvature bounds and synthetic bounds was established in \cite{alexander2008triangle}, however in this approach the sectional curvatures of both timelike and spacelike planes have to be considered. In this article, we fill a gap in the literature by proving the full equivalence between sectional curvature bounds on timelike planes and synthetic timelike bounds on strongly causal spacetimes. As an essential tool, we establish Hessian comparison for the time separation and signed distance functions.
\vspace{1em}

\noindent
\emph{Keywords:} Timelike sectional curvature bounds, triangle comparison, hinge comparison
\medskip

\noindent
\emph{MSC2020:} 53B30, 53C23, 53B50

\end{abstract}
%\newpage
%\tableofcontents
%\newpage

\section{Introduction}\label{section: intro}

Sectional curvature bounds are of great importance in both Riemannian and Lorentzian geometry. Spaces whose sectional curvature is bounded enjoy an abundance of geometric and topological properties which are usually quite rigid, i.e.\ slight variations on the curvature bound do not break these properties outside of some special cases.

In Riemannian geometry, Toponogov's comparison theorem and related results establish the equivalence between sectional curvature bounds and the comparison of various geometric configurations in the original space with those in the space of the corresponding constant sectional curvature. Some geometric configurations considered in this context are (geodesic) triangles, hinges, distances from a point to a geodesic, and so on. These descriptions of sectional curvature bounds, due to their independence of differential quantities, have led to a rich theory of metric spaces with sectional curvature bounds, i.e.\ Alexandrov spaces (see \cite{burago2001course} for an introductory text on the subject, and the references therein).

In the Lorentzian setting, timelike sectional curvature bounds have been considered as a global curvature condition stronger than the strong energy condition from General Relativity. The Lorentzian timelike sectional curvature comparison theory was initiated in \cite{harris1982triangle}, where it is shown that timelike sectional curvature bounds from  above imply a certain timelike/causal hinge comparison result. The methods established there have been crucial in the proofs of several important spacetime-geometric results, e.g.\ the proof of the Lorentzian splitting theorem for spacetimes with nonpositive timelike sectional curvature bounds containing a complete timelike line in \cite{beem1985toponogov, beem1985decomposition} and the proof of Bartnik's cosmological splitting conjecture \cite{bartnik1988remarks} under the stronger assumption of nonpositive timelike sectional curvature bounds in \cite{galloway2018existence}.

In the general setting of semi-Riemannian manifolds with arbitrary (constant) signature, an equivalence between bounds based on geometric triangle/angle/hinge comparison and a certain notion of sectional curvature comparison is available \cite{alexander2008triangle}. However, the methods employed in that work involve both timelike and spacelike planes, and considering bounds on spacelike planes in the Lorentzian context seems unnatural for the purposes of General Relativity and spacetime geometry.

Recently, an approach analogous to that of Alexandrov spaces has been initiated in the Lorentzian case (see \cite{kunzinger2018lorentzian}; see also \cite{minguzzi2022lorentzian} for a more recent local approach). The literature on Lorentzian synthetic spaces with timelike sectional curvature bounds is quite rich despite the field's rather recent initiation (see e.g.\ \cite{grant2019inextendibility, alexander2019generalized, beran02, rott2022gluing, beran2023splitting, beran2023alexandrov, beran2023toponogov}).
%Even more recent is the approach to timelike Ricci curvature bounds in the Lorentzian synthetic setting using optimal transport methods, analogous to the classical Lott-Sturm-Villani theory for metric measure spaces (see \cite{cavalletti2020optimal,braun2022timelike,braun2023causal} and the references therein).

However, there is the following gap in the literature: The proof of \cite{harris1982triangle} only establishes that if $(M,g)$ is a globally hyperbolic spacetime of order $\sqrt{-K}$ whose timelike sectional curvature is bounded above by $K \in \R$, then it satisfies a global bound based on timelike hinge comparison by the same constant $K$. The converse direction as well as either direction for the opposite bounds do not seem to be available in the literature. The aim of this article is to fill this gap.

The paper is structured as follows: In Section \ref{section: variousnotionsofseccurvbounds}, we recall the notion of timelike sectional curvature bounds (Subsection \ref{Subsection: smoothtimelikeseccurvbounds}) on strongly causal spacetimes and establish a Hessian comparison result for the time separation and signed distance from a point. Then, in Subsection \ref{Subsection: synthetictimelikeseccurvbounds}, we discuss the various notions of synthetic timelike sectional curvature bounds, but restrict ourselves to strongly causal spacetimes (instead of general Lorentzian synthetic spaces) for the convenience of the reader. In Section \ref{section: equivalence_timelike_sec_curv_bounds}, we establish the aforementioned equivalence between smooth and synthetic timelike sectional curvature bounds, the direction synthetic $\Rightarrow$ smooth in Subsection \ref{subsection: synthetictosmooth} using Jacobi field computations, and the direction smooth $\Rightarrow$ synthetic in Subsection \ref{subsection: smoothtosynthetic} using Hessian comparison. In Subsection \ref{subsection: timelikeboundsandnontimelikeplanes}, we show via an explicit example that timelike sectional curvature bounds are strictly weaker than the corresponding semi-Riemannian bounds from \cite{alexander2008triangle}. Finally, in Section \ref{section: conclusion_outlook}, we summarize our work, give an improved version of the spacetime Reshetnyak gluing result from \cite{beran02} as an application, and discuss some related open problems.

\subsection{Notation and conventions} \label{subsection: notat_and_conv}

The symbol $\subset$ denotes (not necessarily strict) inclusion. Spacetime metrics are assumed to be smooth and have signature $(-,+,\dots,+)$. The Riemann curvature tensor of a semi-Riemannian manifold $(M,g)$ is $R_{X,Y}Z \equiv R(X,Y)Z = \nabla_X \nabla_Y Z - \nabla_Y \nabla_X Z - \nabla_{[X,Y]} Z$, where $X,Y,Z \in \mathfrak{X}(M)$ (note that this differs from \cite{alexander2008triangle,o1983semi} by a sign, but is consistent with \cite{harris1982triangle, beem1985toponogov, beem1985decomposition, BeemEhrlich, galloway2018existence}). We interchangeably write $\langle \cdot, \cdot \rangle$ for the metric $g$. The number $D_K$ is defined to be $+ \infty$ if $K \geq 0$ and $\pi/\sqrt{-K}$ if $K < 0$. A vector field $J$ along a geodesic $\gamma$ is called a \textit{Jacobi field} if $J'' + R_{J,\gamma'}\gamma' = 0$. For linear maps $A,B$ on a semi-Euclidean vector space $(V,\langle .,.\rangle)$, we write $A \geq B$ if $\langle Av,v\rangle \geq \langle Bv,v\rangle$ for all $v \in V$. We use standard notation from spacetime geometry (see \cite{BeemEhrlich, o1983semi}), i.e.\ $\ll$ and $\leq$ for the timelike resp.\ causal relation, $I^{\pm}$ and $J^{\pm}$ for timelike resp.\ causal futures/pasts, as well as $I(p,q) = I^+(p) \cap I^-(q)$ and $J(p,q) = J^+(p) \cap J^-(q)$ for timelike resp.\ causal diamonds, and $\tau$ for the time separation function.

\section{Various notions of timelike sectional curvature bounds} \label{section: variousnotionsofseccurvbounds}

In this section, we introduce and discuss various notions of timelike sectional curvature bounds on smooth spacetimes. Throughout, $(M,g)$ denotes a smooth, strongly causal spacetime of dimension $\geq 2$. The reason why we assume strong causality is because we require in an essential way the compatibility of the global time separation function with the local time separation on convex sets induced by the exponential map. 

\subsection{Smooth timelike sectional curvature bounds}
\label{Subsection: smoothtimelikeseccurvbounds}

We start with the classical smooth notion, which involves bounds on sectional curvatures of timelike planes. Recall that if $\{v,w\}$ span a non-degenerate $2$-plane in a tangent space $T_pM$, then the sectional curvature of $\Pi:= \mathrm{span}(v,w)$ is
\begin{align}
    K(\Pi) := \frac{\langle R(w,v)v,w\rangle}{\langle v,v \rangle \langle w,w\rangle - \langle v,w \rangle^2}\label{eq:secCurvOfPlane}
\end{align}

While our convention for the Riemann curvature tensor differs from that in \cite{alexander2008triangle, o1983semi} by a sign, the definition of sectional curvature agrees.\footnote{In \cite{alexander2008triangle, o1983semi}, the Riemann tensor in the numerator of \eqref{eq:secCurvOfPlane} appears in the form $\langle R(v,w)v,w \rangle$, hence the two minuses cancel out to give the same definition of sectional curvature that we use.}
Note that $K(\Pi)$ does not depend on the choice of basis vectors $\{v,w\}$.

\begin{Definition}[Smooth timelike sectional curvature bounds]
\label{Definition: smoothseccurvbounds}
We say that $(M,g)$ has \textit{smooth timelike sectional curvature bounded above (resp.\ below)} by $K \in \R$ if for all $p \in M$ and all timelike $2$-planes $\Pi \subset T_pM$, we have
\begin{align}
    K(\Pi) \leq K \quad (\text{resp. }K(\Pi) \geq K).
\end{align}
\end{Definition}

\begin{Remark}[On smooth timelike sectional curvature bounds]
\label{Remark: onsmoothseccurvbounds}
\begin{enumerate}
    \item[]
    \item Usually, one works with orthonormal bases of timelike tangent planes, i.e.\ a given timelike $2$-plane $\Pi \subset T_pM$ is spanned by $v,w \in T_pM$ such that $v$ is unit timelike ($\langle v,v \rangle = -1$), $w$ is unit spacelike ($\langle w,w \rangle = +1$) and $\langle v,w \rangle = 0$. In this case, \eqref{eq:secCurvOfPlane} simplifies to
    \begin{align}
        K(\Pi) = - \langle R(w,v)v,w \rangle.
    \end{align}
    \item If $(M,g)$ has smooth timelike sectional curvature bounded above (resp.\ below) by $K \in \R$, then for all $v \in TM$ timelike
    \begin{align}
    \Ric(v,v) \geq (n-1) K \langle v,v \rangle  \quad (\text{resp. } \Ric(v,v) \leq (n-1)K \langle v,v \rangle).
    \end{align}
    Indeed: For definiteness, suppose the bound is from above by $K$ and let $v \in T_pM$ be any timelike tangent vector. W.l.o.g.\ we may assume that $\langle v,v \rangle = -1$. Complete $\{v\}$ to an orthonormal basis $\{v,e_1,\dots,e_{n-1}\}$ of $T_pM$, where $n:=\dim M$. Writing $e_0:=v$, we have $\langle R(e_i,v)v,e_i \rangle \geq -K$ ($i=1,\dots,n-1$) by the curvature bound, hence
    \begin{align*}
        \Ric(v,v) = \sum_{i=0}^{n-1} \langle e_i,e_i\rangle \langle R(e_i,v)v,e_i \rangle = \sum_{i=1}^{n-1} \langle R(e_i,v)v,e_i \rangle \geq -(n-1)K.
    \end{align*}
    \item In \cite{alexander2008triangle}, sectional curvature bounds on general semi-Riemannian manifolds are defined as follows: A semi-Riemannian manifold $(M,g)$ is said to have sectional curvature bounded below (above) by $K \in \R$ if for all $p \in M$ and all $v,w \in T_pM$,
    \begin{align}
        \langle R(w,v)v,w \rangle \geq K(\langle v,v \rangle \langle w,w \rangle - \langle v,w \rangle^2) \quad (\text{resp. } \leq).
    \end{align}
    Note that a sectional curvature bound below (above) by $K$ in this sense implies a smooth timelike sectional curvature bound above (below) by $K$. 
\end{enumerate}
\end{Remark}

Due to Remark \ref{Remark: onsmoothseccurvbounds}(ii), a smooth timelike sectional curvature bound from  \textbf{above} (resp.\ \textbf{below}) by $K \in \R$ corresponds geometrically (viewed from the perspective of positive definite, i.e.\ Riemannian, geometry) to a  ``curvature bound" from \textbf{below} (resp.\ \textbf{above}) by $-K \in \R$. As we will see, timelike sectional curvature bounds \textbf{above} (resp.\ \textbf{below}) by $K$ correspond to synthetic timelike bounds \textbf{below} (resp.\ \textbf{above}) by $K$. This mismatch is due to the fact that the conventions in the synthetic literature were chosen to align with \cite{alexander2008triangle}, cf.\ Remark \ref{Remark: onsmoothseccurvbounds}(iii).

Let us now turn to an important consequence of smooth timelike sectional curvature bounds, namely Hessian comparison for the time separation and signed distance. Related results are readily available in the literature (cf.\ \cite{alexander2008triangle, treude2013volume}), but we do not know of any source which contains this exact analogue of the well-known Riemannian result.

We will need the notion of \emph{signed distance} from \cite{alexander2008triangle}: Given a convex set $U \subset M$ and $p,q \in U$, the signed distance between $p$ and $q$ is $\sigdis{pq}:= \pm|\exp_p^{-1}(q)|_g$, where the sign is ``$-$'' if $p \ll q$ or $q \ll p$, and ``$+$'' otherwise.

\begin{Definition}[Modified distance functions]
Given $K \in \R$, the \textit{modified distance function} $\md^K_\tau:[0,D_K) \to \R$ is defined as
\begin{align}\label{eq:signed_distance_tau}
    \md^K_\tau(t):= \begin{cases}
        \frac{t^2}{2} & K = 0,\\
        \frac{\cosh(\sqrt{K}t) - 1}{K} & K > 0,\\
        \frac{\cos(\sqrt{-K}t) - 1}{K} & K < 0.
    \end{cases}
\end{align}
This definition is tailored to the time separation function. For some of our results it will be necessary to switch to the signed distance. 
This is why we introduce the \emph{extended modified distance function}
$\md^K_S:(-D_K,D_{-K}) \to \R$ 
\begin{align}
    \md^K_S(t):= \begin{cases}
        \md^{-K}_\tau(t) & t\ge 0,\\
        -\md^K_\tau(-t) & t < 0.
    \end{cases}
\end{align}
\end{Definition}
This definition reflects the anti-isometry between de Sitter space and anti-de Sitter space (resp.\ from Minkowski space to itself): it sends spacelike vectors in de Sitter space to timelike vectors of the same length in anti-de Sitter space and vice versa.

It is easy to check that on $(-D_K,0]$, $\md^K_S$ is the unique solution of the initial value problem
\begin{align*}
    \begin{cases}
        (\md^K_S)'' - K \md^K_S = -1,\\
        \md^K_S(0) = 0,\\
        (\md^K_S)'(0) = 0.
    \end{cases}
\end{align*}
and on $[0,D_{-K})$, 
$\md^K_S$ is the unique solution of the initial value problem
\begin{align*}
    \begin{cases}
        (\md^K_S)'' + K \md^K_S = 1,\\
        \md^K_S(0) = 0,\\
        (\md^K_S)'(0) = 0.
    \end{cases}
\end{align*}

\begin{Theorem}[Lorentzian Hessian comparison]
\label{Theorem: Hessiancomparison}
Let $(M,g)$ be a strongly causal spacetime with smooth timelike sectional curvature bounded above (resp.\ below) by $K \in \R$, $p \in M$ and $U$ a convex neighborhood of $p$.
Then we have the following inequalities for $(1,1)$-Hessians: 
\begin{enumerate}
\item On $I^+(p) \cap U$, for \emph{any} vector $v$, we have
\begin{equation}
    \label{eq: signeddistanceHessiancomp}   
    \left<\mathrm{Hess}\, \md^K_S(\sigdis{p\cdot})(v),v\right> \leq (1-K\md^K_S(|p\cdot|_\pm))\left<v,v\right> \quad (\text{resp. } \geq)\,, 
\end{equation}
or equivalently
\begin{equation}
\label{eq: Hessiancomp}
    \mathrm{Hess}\, \tau(p,\cdot) \geq - \frac{(\md^K_\tau)''(\tau(p,\cdot))}{(\md^K_\tau)'(\tau(p,\cdot))} \pi_{\tau(p,.)} \quad (\text{resp. } \leq)\,,
\end{equation}
where $\pi_{\tau(p,.)}$ denotes the projection onto the tangent spaces of level sets of $\tau(p,.)$, more concretely $\pi_{\tau(p,.)} = \mathrm{Id} + \langle \nabla \tau(p,.),.\rangle \nabla \tau(p,.)$. 

If, in addition, $(M,g)$ is globally hyperbolic if $K \geq 0$ or order ($\sqrt{-K}$)-globally hyperbolic\footnote{Global hyperbolicity of order $\sqrt{-K}$ means that for any $x\ll y$ with $\tau(x,y) < D_K$ the causal diamond $J(x,y)$ is compact. This notion was introduced in \cite{harris1982triangle}.} if $K < 0$, then these inequalities hold on $I^+(p) \setminus \mathrm{Cut}^+(p)$ if $K\geq 0$, and on $\{q \in I^+(p) : \tau(p,q) < D_K\} \setminus \mathrm{Cut}^+(p)$ if $K < 0$.
\item On all of $U$, for all \emph{timelike} vectors $v$, we have 
\begin{equation}
\label{eq: signeddistanceHessiancomp2}
\left<\mathrm{Hess}\, \md^K_S(\sigdis{p\cdot})(v),v\right> \leq (1-K \md^K_S(|p\cdot|_\pm))\left<v,v\right> \quad (\text{resp. } \geq)\,.
\end{equation}
\end{enumerate}
The time reversed version of (i) also holds.
\end{Theorem}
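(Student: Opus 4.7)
The plan is to reduce the Hessian inequality to a Jacobi field / Riccati comparison along the radial geodesic from $p$. Fix $q \in I^+(p) \cap U$ and let $\gamma:[0,s_0]\to U$ be the unique maximizing unit timelike geodesic from $p$ to $q$, so that $s_0 = \tau(p,q)$. Then $\tau(p,\cdot)$ is smooth on $I^+(p) \cap U$ with $\nabla\tau = -\gamma'$ along $\gamma$, its level sets are smooth spacelike hypersurfaces, and $\mathrm{Hess}\,\tau(\gamma',\gamma') = 0$ since $\tau\circ\gamma$ is linear. On the orthogonal complement $(\gamma')^\perp$ the (0,2)-Hessian equals $-\langle S(s_0)\cdot,\cdot\rangle$, where $S(s)$ denotes the shape operator $w\mapsto \nabla_w\gamma'$ of the $\tau$-level hypersurfaces along $\gamma$. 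These two observations match the projection $\pi_{\tau(p,.)}$ appearing in \eqref{eq: Hessiancomp} (which vanishes on $\gamma'$ and is the identity on $(\gamma')^\perp$), reducing the inequality to a comparison of $S(s_0)$ with its constant-curvature model counterpart.

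Next, $S(s)$ satisfies the matrix Riccati equation $S' + S^2 + R_{\gamma'} = 0$ on $(\gamma'(s))^\perp$ with asymptotic initial data $S(s) \sim \tfrac{1}{s}\,\mathrm{Id}$ as $s\to 0^+$, where $R_{\gamma'}w := R(w,\gamma')\gamma'$. By Remark \ref{Remark: onsmoothseccurvbounds}(i), a timelike sectional curvature bound $K(\Pi)\leq K$ along $\gamma$ translates to the operator inequality $R_{\gamma'}\geq -K\,\mathrm{Id}$ on $(\gamma')^\perp$, while the constant-curvature-$K$ model has $S_K(s) = \frac{(\md^K_\tau)''(s)}{(\md^K_\tau)'(s)}\,\mathrm{Id}$ satisfying the same Riccati equation with $R_{\gamma'}$ replaced by $-K\,\mathrm{Id}$. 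Standard matrix Riccati comparison with the common asymptotics at $0^+$ then yields $S(s_0) \leq S_K(s_0)$, hence $\mathrm{Hess}\,\tau(p,\cdot) \geq -S_K(s_0)$ on $(\gamma'(s_0))^\perp$, which is exactly \eqref{eq: Hessiancomp}. The lower bound is analogous. The equivalence with \eqref{eq: signeddistanceHessiancomp} follows from $\sigdis{p\cdot} = -\tau(p,\cdot)$ on $I^+(p)$, the relation $\md^K_S(t) = -\md^K_\tau(-t)$ for $t\leq 0$, the chain rule, and the identity $(\md^K_\tau)'' = 1 + K\md^K_\tau$.

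For the global statement of (i), strong causality together with (order-$\sqrt{-K}$) global hyperbolicity guarantees that on $I^+(p)\setminus\mathrm{Cut}^+(p)$ (intersected with $\{\tau(p,\cdot)<D_K\}$ when $K<0$) there is a unique maximizing timelike geodesic from $p$ to any such $q$ along which $\tau(p,\cdot)$ is smooth, and the local Riccati comparison extends along the full geodesic; the bound $\tau(p,q)<D_K$ is precisely what ensures that $(\md^K_\tau)' > 0$ throughout, so that $S_K$ stays finite and the comparison is valid up to $s_0$.

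For part (ii), at $q\in U$ the radial geodesic $\gamma\subset U$ (provided by convexity) may be spacelike or null, so I switch to working directly with $\sigdis{p\cdot}$ and the extended function $\md^K_S$, whose two defining ODEs on either side of $0$ encode the anti-isometry between de~Sitter and anti-de~Sitter space. The restriction to \emph{timelike} $v$ is essential here: the plane $\mathrm{span}(\gamma'(s_0),v)$ is then automatically timelike (it contains the timelike vector $v$), so the timelike sectional curvature bound still controls the relevant Jacobi operator, while for spacelike $v$ along a spacelike $\gamma'$ the plane could be entirely spacelike and escape the hypothesis. An analogous Jacobi/Riccati computation along $\gamma$, combined with the chain rule applied to $\md^K_S\circ \sigdis{p\cdot}$, then yields \eqref{eq: signeddistanceHessiancomp2}. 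The main technical obstacle I anticipate is precisely this unified treatment across a change of causal character of the radial direction, which is the reason for the two-sided definition of $\md^K_S$ and for having to pass from $\tau$ to the signed distance. The time-reversed version of (i) follows by applying the same argument to $(M,g)$ with the time orientation reversed.
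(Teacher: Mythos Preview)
Your argument for part~(i) is correct and coincides with the paper's approach: both reduce to the Riccati comparison for the shape operator along the radial timelike geodesic, which is exactly what \cite[Prop.~5.2, Cor.~4.6]{alexander2008triangle} does. Since $\gamma'$ is unit timelike, $(\gamma')^\perp$ is positive definite and the full operator bound $R_{\gamma'}\geq -K\,\mathrm{Id}$ holds there, so the standard matrix Riccati comparison applies without further ado.

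Part~(ii), however, has a genuine gap. You correctly observe that for a timelike $v$ the plane $\mathrm{span}(\gamma',v)$ is timelike, so the hypothesis controls $\langle R_{\gamma'}v,v\rangle$ for that particular $v$. But the Riccati equation $S'+S^2+R_{\gamma'}=0$ is a \emph{matrix} ODE on a space that is now Lorentzian (since $\gamma$ may be spacelike or null), and knowing the quadratic form $\langle R_{\gamma'}\,\cdot,\cdot\rangle$ only on timelike vectors does \emph{not} automatically propagate to a bound on $\langle S\,\cdot,\cdot\rangle$ for timelike vectors via ``analogous Riccati computation.'' The standard comparison (Eschenburg--Heintze style, or \cite[Thm.~4.3]{alexander2008triangle}) uses the full operator inequality $R_1\leq R_2$; with only a partial cone of directions controlled, the $S^2$-term in the Riccati equation couples different directions and the argument does not go through verbatim.

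The paper closes this gap by tracing the contradiction argument in \cite[Thm.~4.3]{alexander2008triangle}: one assumes the conclusion fails on timelike vectors, takes the first time $t_0$ and vector $x_0$ at which $\langle(S_1(t_0)-S_\delta(t_0))x_0,x_0\rangle=0$, observes that $x_0$ is then necessarily \emph{timelike} (since that is the cone on which failure is being tracked), and hence the curvature hypothesis---available only at timelike vectors---can be invoked at $x_0$ to complete the contradiction. This is the missing idea in your proposal; your final paragraph anticipates a difficulty but does not supply this step.
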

\begin{proof}
The proof proceeds by adapting arguments from \cite{alexander2008triangle}, to which we refer for notations used below.

For (i), note that the proof of \cite[Prop.\ 5.2]{alexander2008triangle} only uses curvature comparison in applying \cite[Cor.\ 4.6]{alexander2008triangle}, and the proof of that result only uses $R_{\sigma'}\geq \tilde{R}_{\tilde{\sigma}'}$ where $\sigma$ is timelike in the case we are considering.

For the part of (ii) that is not covered by (i), we only have $\langle R_{\sigma'}(v),v\rangle \geq \langle \tilde{R}_{\tilde{\sigma}'}(v),v\rangle$ for timelike vectors $v$. We replace \cite[Cor.\ 4.5]{alexander2008triangle} by claiming that $\langle R_{\sigma'}(v),v\rangle \geq \langle \tilde{R}_{\tilde{\sigma}'}(v),v\rangle$ for all timelike vectors $v$ implies $\langle S_{K,q}(v),v\rangle \leq \langle \tilde{S}_{K,q}(v),v\rangle$ for all timelike vectors $v$. Note that the proof of \cite[Cor.\ 4.5]{alexander2008triangle} still works up to \cite[Thm.\ 4.3]{alexander2008triangle}, which we again have to replace by the claim that $\langle R_1(v),v\rangle \leq \langle R_2(v),v\rangle$ for all timelike vectors $v$ implies $\langle S_1(v),v\rangle \geq \langle S_2(v),v\rangle$ for all timelike vectors $v$. To obtain this, note that in the argument by contradiction in the proof of \cite[Thm.\ 4.3]{alexander2008triangle} the vector $x_0$ with $\langle(S_1(t_0)-S_\delta(t_0))x_0,x_0\rangle=0$ is timelike in our case. Then indeed the rest of the proof of \cite[Thm.\ 4.3]{alexander2008triangle} goes through unchanged and gives our desired inequality between $S_1$ and $S_2$.
\end{proof}

\begin{Remark}[D'Alembertian comparison]
Suppose that $(M,g)$ has smooth timelike sectional curvature bounded above (resp.\ below) by $K \in \R$. By taking the trace of the inequality \eqref{eq: Hessiancomp}, one obtains the well-known d'Alembertian comparison inequality
\begin{align}
    \square \tau(p,.) \geq - (n-1) \frac{(\md^K_\tau)''(\tau(p,.))}{(\md^K_\tau)'(\tau(p,.))} \quad (\text{resp.\ } \leq),
\end{align}
which is known to hold under the weaker assumption of $\Ric(v,v) \geq (n-1)K \langle v,v\rangle$ (resp.\ $\leq$) for all $v \in TM$ timelike (see e.g.\ \cite[Thm.\ 3.3.5]{treude2011ricci}).
\end{Remark}

\subsection{Synthetic timelike sectional curvature bounds}
\label{Subsection: synthetictimelikeseccurvbounds}

Let us now move on to a discussion of various synthetic notions of timelike sectional curvature bounds. The study of such notions goes back to \cite{kunzinger2018lorentzian}, where Lorentzian (pre-)length spaces were introduced as synthetic analogues of spacetimes. There, synthetic timelike curvature bounds were studied via timelike and causal triangles. Since then, notions based on angles, hinges, four-point conditions and concavity/convexity of the time separation have been introduced  \cite{beran2022angles, barrera2022comparison, beran2023curvature}. In \cite{beran2023curvature}, the authors  study the interrelationship between these various synthetic notions of timelike curvature bounds, and show the equivalence of all of these notions under mild assumptions. Since we restrict ourselves to the study of classical spacetimes in this article, we will state all of the definitions and results obtained in the aforementioned papers only for spacetimes. As before, $(M,g)$ is a strongly causal spacetime of dimension $n:=\dim M \geq 2$. Let us first introduce some preliminary notions.

\begin{Definition}[Constant curvature spaces]
We denote by $\mathbb{L}^2(K)$ the simply connected $2$-dimensional Lorentzian space form of constant sectional curvature $K$, see \cite[Def.\ 4.5]{kunzinger2018lorentzian}. The number $D_K$ is the timelike diameter of its maximal globally hyperbolic subsets. We will denote by $\bar{\tau}$ the time separation function on $\mathbb{L}^2(K)$.
\end{Definition}

Next, we introduce the synthetic notions needed to define synthetic timelike sectional curvature bounds.

\begin{Definition}[Triangles, angles, hinges]
\label{Definition: trianglesandangles}
Let $U \subset M$ be a convex set.
\begin{enumerate}
\item A \textit{timelike triangle} in $U$ is a triple $(p,q,r) \in U^3$ such that $p \ll q \ll r$. A \textit{causal triangle} in $U$ is a triple $(p,q,r) \in U^3$ such that $p \ll q \leq r$ or $p \leq q \ll r$. We will also speak of timelike or causal triangles when we mean the connected radial geodesics between $p,q,r$, and we will write $\Delta(p,q,r) \subset U$, in which case we refer to the connecting geodesics as the \textit{sides} of the triangle. \textit{Triangle} will always refer to timelike or causal triangles.
\item A \textit{comparison triangle} for a triangle $\Delta (p,q,r)$ in $U$ is a triple $(\bar{p},\bar{q},\bar{r})$ in $\mathbb{L}^2(K)$ with $\tau(p,q) = \bar{\tau}(\bar{p},\bar{q})$, $\tau(q,r) = \bar{\tau}(\bar{q},\bar{r})$ and $\tau(p,r) = \bar{\tau}(\bar{p},\bar{r})$. When $K < 0$, we suppose that $U$ is chosen small enough such that $\sup_{p,q \in U} \tau(p,q) < D_K$. This ensures that $\Delta(\bar{p},\bar{q},\bar{r})$ is a triangle in a convex subset of $\mathbb{L}^2(K)$. With these conventions, for any triangle $\Delta(p,q,r) \subset U$ there exists a unique (up to isometry) comparison triangle in $\mathbb{L}^2(K)$
(cf.\ \cite[Lem.\ 2.1]{alexander2008triangle}). Given $x \in \Delta(p,q,r)$ on a timelike side, we refer to the unique point $\bar{x}$ on the corresponding timelike side of $\Delta(\bar{p},\bar{q},\bar{r})$ with the same time separation to the endpoints of the side as the \textit{corresponding point}.
\item Given a triangle $\Delta(p,q,r) \subset U$ such that $p \ll q$ and its comparison triangle $\Delta(\bar{p},\bar{q},\bar{r}) \subset \mathbb{L}^2(K)$, the \textit{$K$-comparison angle at $p$} is
\begin{align*}
    \tilde{\measuredangle}_p^K(q,r):= \measuredangle_{\bar{p}}^{\mathbb{L}^2(K)}(\bar{q},\bar{r}) = \mathrm{arcosh}(|\langle \gamma_{\bar{p}\bar{q}}'(0),\gamma_{\bar{p}\bar{r}}'(0)\rangle|),
\end{align*}
where $\gamma_{\bar{p}\bar{q}}$ and $\gamma_{\bar{p}\bar{r}}$ are the unique $\bar{\tau}$-arclength parametrized timelike maximizing geodesics in $\mathbb{L}^2(K)$ from $\bar{p}$ to $\bar{q}$ and $\bar{p}$ to $\bar{r}$, respectively. The \textit{sign} $\sigma$ of a $K$-comparison angle is the sign of the inner product between the tangent vectors of the geodesics above. The \textit{signed $K$-comparison angle} $\tilde{\measuredangle}_p^{K,S}(q,r)$ is defined as the product between the $K$-comparison angle and its sign.
\item Given $p \in U$ and $\alpha,\beta$ unit speed future or past directed timelike geodesics from $p$ contained in $U$, the \textit{angle at $p$ between $\alpha$ and $\beta$} is $\measuredangle_p(\alpha,\beta):= \mathrm{arcosh}(|\langle \alpha'(0),\beta'(0)\rangle|)$. It is precisely the limit of $K$-comparison angles $\tilde{\measuredangle}_p^K(\alpha(s),\beta(t))$ as $(s,t) \to (0,0)$ with $(s,t)$ in a set such that $\Delta(p,\alpha(s),\beta(t))$ is a timelike (or causal) triangle, independently of $K \in \R$ \cite{beran2022angles}. The \textit{sign} $\sigma$ of an angle and the \textit{signed angle} $\measuredangle^S_p(\alpha,\beta)$ are defined in the analogous way.
\item A \textit{hinge} in $U$ is a point $p \in U$ together with two radial timelike geodesics $\alpha,\beta$ from $p$. A \textit{comparison hinge} in $\mathbb{L}^2(K)$ is a point $\bar{p}$ together with two radial timelike geodesics $\bar{\alpha},\bar{\beta}$ from $p$ of the same length and same time orientation as $\alpha,\beta$, such that $\measuredangle_p(\alpha,\beta) = \measuredangle_{\bar{p}}(\bar{\alpha},\bar{\beta})$.
\end{enumerate}
\end{Definition}

\begin{Definition}[Synthetic timelike sectional curvature bounds]
\label{Definition: synthetictimelikeseccurvbounds}
We say that $(M,g)$ has \textit{synthetic timelike sectional curvature bounded below (resp.\ above)} by $K \in \R$ in any of the following senses if $M$ is covered by convex sets $U$ (if $K < 0$, suppose in addition $\sup_{q_1,q_2 \in U} \tau(q_1,q_2) < D_K$) such that the corresponding properties listed below are satisfied:
\begin{enumerate}
    \item \textit{Timelike triangle comparison}: Given any timelike triangle $\Delta(p,q,r) \subset U$ and its comparison triangle $\Delta(\bar{p},\bar{q},\bar{r}) \subset \mathbb{L}^2(K)$, for any $x,y$ on the sides and corresponding points $\bar{x},\bar{y}$, we have
    \begin{align}
        \tau(x,y) \leq \bar{\tau}(\bar{x},\bar{y}) \quad (\text{resp.\ } \geq).
    \end{align}
    %\item \textit{One-sided timelike triangle comparison}: Given any timelike triangle $\Delta(p,q,r) \subset U$ and its comparison triangle $\Delta(\bar{p},\bar{q},\bar{r}) \subset \mathbb{L}^2(K)$, given any point $x$ on a side of $\Delta(p,q,r)$ and $v \in \{p,q,r\}$ the vertex opposite to $x$, we have
   % \begin{align}
  %      \tau(x,v) \leq \bar{\tau}(\bar{x},\bar{v}) \text{  and  } \tau(v,x) \leq \bar{\tau}(\bar{v},\bar{x}).
   % \end{align}
    \item \textit{Monotonicity comparison}: Given two radial timelike geodesics $\alpha:[0,a] \to U$ and $\beta:[0,b] \to U$ starting at $x:=\alpha(0)=\beta(0)$, the function $\theta(s,t):=\tilde{\measuredangle}_x^{K,S}(\alpha(s),\beta(t))$, defined for all $(s,t) \in (0,a] \times (0,b]$ such that $\alpha(s) \leq \beta(t)$, is monotonically increasing in both $s$ and $t$ (resp.\ decreasing).
    \item \textit{Angle comparison}: Given two radial timelike geodesics $\alpha:[0,a] \to U$ and $\beta:[0,b] \to U$ starting at $x:=\alpha(0) = \beta(0)$ such that $\alpha(a) \leq \beta(b)$, we have
    \begin{align}
        \measuredangle^S_x(\alpha,\beta) \leq \tilde{\measuredangle}_x^{K,S}(\alpha(a),\beta(b)) \quad (\text{resp.\ } \geq).
    \end{align}
    \item \textit{Hinge comparison}: Given two radial timelike geodesics $\alpha:[0,a] \to U$ and $\beta:[0,b] \to U$ starting at $x:=\alpha(0) = \beta(0)$, consider a comparison hinge $(\bar{\alpha},\bar{\beta})$ in $\mathbb{L}^2(K)$. Then
    \begin{align}
        \tau(\alpha(a),\beta(b)) \geq \bar{\tau}(\bar{\alpha}(a),\bar{\beta}(b)) \quad (\text{resp.\ } \leq).
    \end{align}
    \item \textit{Causal triangle comparison}: Given any causal triangle $\Delta(p,q,r) \subset U$ and its comparison triangle $\Delta(\bar{p},\bar{q},\bar{r}) \subset \mathbb{L}^2(K)$, for any $x,y$ on the sides of $\Delta(x,y,z)$ and corresponding points $\bar{x},\bar{y} \in \Delta(\bar{p},\bar{q},\bar{r})$ we have
    \begin{align}
        \tau(p,q) \leq \bar{\tau}(\bar{x},\bar{y}) \quad (\text{resp.\ } \geq).
    \end{align}
    \item \textit{Convexity/concavity of the time separation}: Given any $p \in U$ and any timelike geodesic segment $\gamma:[a,d] \to U$, the function $f:D \to \R$, where $D:=\{t \in [a,d] : \gamma(t) \in J^+(p) \cup J^-(p)\}$, defined by
    \begin{align}
        f(t):=\begin{cases}
            \md^K_\tau(\tau(p,\gamma(t))), & p \leq \gamma(t),\\
            \md^K_\tau(\tau(\gamma(t),p)), & \gamma(t) \leq p,
        \end{cases}
    \end{align}
    can be extended to all of $[a,d]$ in such a way that it satisfies the following differential inequality in the distributional sense:
    \begin{align}
        f'' - Kf \geq 1 \quad (\text{resp.\ } \leq).
    \end{align}
\end{enumerate}
\end{Definition}

\begin{Theorem}[Equivalence of synthetic bounds]
\label{Theorem: Equivalenceofsyntheticbounds}
Let $(M,g)$ be a strongly causal spacetime. Then all of the notions of synthetic timelike sectional curvature bounds below (resp.\ above) by $K \in \R$ introduced in Definition \ref{Definition: synthetictimelikeseccurvbounds} are equivalent.
\begin{proof}
    This follows from \cite[Thm.\ 5.1]{beran2023curvature} by noting that the Lorentzian pre-length space structure induced by a strongly causal spacetime is regular, locally causally closed and locally causally $D_K$-geodesic in the sense of the definitions appearing in that reference.
\end{proof}
\end{Theorem}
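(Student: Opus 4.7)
The plan is to reduce the theorem to the general synthetic equivalence for Lorentzian pre-length spaces in \cite[Thm.\ 5.1]{beran2023curvature}. The substantive part is then to verify that $(M,g)$, regarded as a Lorentzian pre-length space with its $\tau$, satisfies the three hypotheses of that theorem: regularity, local causal closedness, and local causal $D_K$-geodesity. I would argue these as follows. Continuity (and finiteness on suitable domains) of $\tau$ on strongly causal spacetimes is standard, and together with the fact that $\ll$ is open yields regularity. For local causal closedness, in any convex normal neighborhood $U$ the causal relation is the $\exp$-image of the closed future causal cone in $T_pM$, so $J^+ \cap (U \times U)$ is closed. For local causal $D_K$-geodesity, in a convex normal $U$ any two causally related points are joined by a unique length-maximizing radial geodesic, and shrinking $U$ in the case $K<0$ enforces $\sup_{q_1,q_2 \in U} \tau(q_1,q_2) < D_K$.

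Once the reduction is in place, the equivalence (i)--(vi) is immediate. For a self-contained account I would arrange the argument as a cycle: (i) $\Rightarrow$ (ii) via Alexandrov's lemma in the model space $\mathbb{L}^2(K)$ applied to the signed comparison angles; (ii) $\Rightarrow$ (iii) by passing to the limit $(s,t) \to (0,0)$, which exists precisely because $\theta(s,t)$ is monotone; (iii) $\Rightarrow$ (iv) by a direct computation using the law of cosines in $\mathbb{L}^2(K)$ to read off the length of the side opposite the hinge; and (iv) $\Rightarrow$ (i) by subdividing a triangle at an interior point of one of its sides into two sub-hinges and iterating the hinge comparison. The equivalence (i) $\Leftrightarrow$ (v) follows by approximating causal triangles by timelike ones using openness of $\ll$ and continuity of $\tau$.

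The most delicate piece is (i) $\Leftrightarrow$ (vi). The key observation is that in the model space $\mathbb{L}^2(K)$, the function $t \mapsto \md^K_\tau(\btau(\bp,\gamma(t)))$ satisfies exactly $f'' - Kf = 1$ along \emph{any} timelike geodesic $\gamma$, a consequence of constant-curvature Jacobi field calculations combined with the elementary identity $(\md^K_\tau)'' = K\md^K_\tau + 1$. Consequently, the pointwise triangle comparison (i) applied along a one-parameter family of timelike geodesics through $p$ translates, after integration against non-negative test functions, into the distributional inequality (vi); the converse direction is obtained by a Sturm-type comparison argument applied to $f$ and its model-space counterpart. The principal obstacle sits here: $\tau(p,\cdot)$ is only locally Lipschitz (not $C^2$), so the second derivative in (vi) must be interpreted distributionally, and the required extension of $f$ across the boundary of $J^+(p) \cup J^-(p)$ calls for careful bookkeeping. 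Everything else is a formal translation between the smooth spacetime framework and the pre-length space framework of \cite{beran2023curvature}.
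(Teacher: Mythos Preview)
Your reduction to \cite[Thm.\ 5.1]{beran2023curvature} by checking that a strongly causal spacetime is regular, locally causally closed, and locally causally $D_K$-geodesic is precisely the paper's proof. The self-contained cycle (i)$\Rightarrow$(ii)$\Rightarrow$(iii)$\Rightarrow$(iv)$\Rightarrow$(i) and the (i)$\Leftrightarrow$(vi) discussion you add are not in the paper and are unnecessary once the citation is in place; incidentally, your worry that $\tau(p,\cdot)$ is merely Lipschitz is misplaced here, since on a convex neighborhood of a smooth spacetime the relevant function $t\mapsto \md^K_S(\sigdis{p\gamma(t)})$ is genuinely smooth (cf.\ the proof of Theorem~\ref{Theorem: Smoothimpliessynthetic}).
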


\begin{Remark}[On synthetic bounds]
\label{Remark: onsyntheticbounds}
For simplicity of presentation, we did not include all notions of synthetic timelike sectional curvature bounds discussed in \cite{beran2023curvature}. There the authors consider (in addition to what we have presented in Definition \ref{Definition: synthetictimelikeseccurvbounds}) one-sided versions of the various conditions, e.g.\ in the triangle comparisons the time separation from an arbitrary point to a vertex, four-point configurations, as well as strict versions of curvature bounds, which amount to comparing the \textit{extended time separation function} $l$ which is defined as $l(x,y) :=\tau(x,y)$ if $x \leq y$ and $l(x,y) := - \infty$ otherwise. All of these versions are again equivalent among each other and to the ones we have discussed on strongly causal spacetimes. We refer to \cite{beran2023curvature} for details.
\end{Remark}

Let us close our discussion on synthetic timelike sectional curvature bounds by addressing the question of globalization of synthetic timelike curvature bounds
by $K\in \R$. To simplify the discussion, for the remainder of this section we restrict our attention to globally hyperbolic spacetimes if $K\ge 0$ and order ($\sqrt{-K}$)-globally hyperbolic spacetimes if $K<0$. We are interested in the following question: 
When are the comparison conditions in Definition \ref{Definition: synthetictimelikeseccurvbounds} satisfied for arbitrarily large synthetic objects instead of local ones in a convex neighborhood? This question has been answered for upper curvature bounds in \cite{beran2023alexandrov} and for lower curvature bounds in \cite{beran2023toponogov} in the more general context of Lorentzian (pre-)length spaces. We summarize the main results in the following Theorem. 
By \textit{global synthetic timelike sectional curvature bounds above resp.\ below by $K\in \R$} we will mean that 
 any of the conditions in Definition \ref{Definition: synthetictimelikeseccurvbounds} 
hold for arbitrarily large objects, e.g.\ global timelike triangle comparison means for any global timelike triangle (i.e.\ choice of points $p \ll q \ll r$, together with a choice of maximizing timelike geodesics between them, such that there exists a comparison triangle in $\mathbb{L}^2(K)$) the comparison inequality holds, and so on. All of the global versions of the notions described in Definition \ref{Definition: synthetictimelikeseccurvbounds} are again equivalent due to \cite[Thm.\ 5.1]{beran2023curvature} (although that result only refers to local bounds, the arguments used in its proof apply to global comparison situations just as well, given our global hyperbolicity assumptions on $M$).

\begin{Theorem}[Globalization of synthetic timelike sectional curvature bounds, \cite{beran2023alexandrov, beran2023toponogov}]
\label{Theorem: Globalization}
Let $K\in \R$ and let $(M,g)$ be a globally hyperbolic spacetime if $K\ge 0$ and an order ($\sqrt{-K}$)-globally hyperbolic spacetime if $K<0$.
\begin{enumerate}
    \item Suppose $(M,g)$ has a synthetic timelike sectional curvature bound from above by $K \in \R$ in any of the equivalent senses of Definition \ref{Definition: synthetictimelikeseccurvbounds}. In addition, suppose the following conditions hold:
    \begin{itemize}
        %\item[$\bullet$] The set $\tau^{-1}([0,D_K)) \subset M^2$ is open and $\tau|_{\tau^{-1}([0,D_K))}$ is continuous.%for not globally hyperbolic spaces
        \item[$\bullet$] For any $(x,y) \in \tau^{-1}((0,D_K))$ there exists a \emph{unique} timelike maximizing geodesic $\gamma_{xy}$ connecting them (parametrized proportional to $\tau$-arclength on $[0,1]$).
        \item[$\bullet$] The geodesic map $\tau^{-1}((0,D_K)) \times [0,1] \to M$, $(x,y,t) \mapsto \gamma_{xy}(t)$ is continuous.
    \end{itemize}
    Then $(M,g)$ has global synthetic timelike sectional curvature bounded above by $K$.
\item Suppose $(M,g)$ has synthetic timelike sectional curvature bounded below by $K \in \R$ in any of the equivalent senses of Definition \ref{Definition: synthetictimelikeseccurvbounds}. Then $(M,g)$ has global synthetic timelike sectional curvature bounded below by $K$.
\end{enumerate}
\end{Theorem}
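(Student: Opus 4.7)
The plan is to reduce both statements to the corresponding globalization theorems in \cite{beran2023alexandrov, beran2023toponogov}, which are formulated in the general setting of Lorentzian pre-length spaces. A strongly causal spacetime $(M,g)$ canonically induces a Lorentzian pre-length space structure (with time separation function $\tau$ and timelike/causal relations $\ll, \leq$ coming from the spacetime), so the task reduces to verifying that the hypotheses of those theorems are met.

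First I would observe that, as already noted in the proof of Theorem \ref{Theorem: Equivalenceofsyntheticbounds}, the induced Lorentzian pre-length space is regular, locally causally closed, and locally causally $D_K$-geodesic. Under the additional (order-$\sqrt{-K}$-)global hyperbolicity assumption, it is moreover globally causally closed on the relevant domain, and by Avez--Seifert the relevant timelike diamonds $J(x,y)$ with $\tau(x,y)<D_K$ are compact and contain maximizing timelike geodesics between chronologically related points. This packages $(M,g)$ into an object of the precise type to which the cited Lorentzian Toponogov and Alexandrov globalization theorems apply.

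For part (i), the Alexandrov-type globalization theorem of \cite{beran2023alexandrov} requires, on top of a local upper curvature bound and global hyperbolicity, the uniqueness of timelike maximizers on $\tau^{-1}((0,D_K))$ together with the continuity of the corresponding geodesic map $(x,y,t)\mapsto \gamma_{xy}(t)$. These are exactly the two bullet-point hypotheses we assume, so the theorem applies and yields global timelike triangle comparison; by the global version of Theorem \ref{Theorem: Equivalenceofsyntheticbounds} this is equivalent to the globalization of all remaining notions in Definition \ref{Definition: synthetictimelikeseccurvbounds}. For part (ii), the Toponogov-type globalization theorem of \cite{beran2023toponogov} does not need the uniqueness or continuity of timelike maximizers: local lower triangle comparison together with the (order-$\sqrt{-K}$-)global hyperbolicity is already enough to propagate the bound globally, so it applies directly.

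The main (and essentially only) technical point is to match the spacetime setup with the axiomatic framework of the two references, in particular to check that the local causal $D_K$-geodesic property plus global hyperbolicity give rise to the full chronological geodesic structure used in those proofs, and that in case (i) the hypothesized uniqueness and continuity of timelike maximizing geodesics coincide with the uniqueness/continuity assumptions on the synthetic geodesic map. Once these identifications are made, the conclusions of \cite{beran2023alexandrov} and \cite{beran2023toponogov} transfer verbatim, completing the proof.
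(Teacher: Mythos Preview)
Your proposal is correct and matches the paper's treatment: the paper does not give its own proof of this theorem but simply attributes it to \cite{beran2023alexandrov, beran2023toponogov}, and your argument spells out exactly the reduction to those results by verifying that a (order-$\sqrt{-K}$-)globally hyperbolic spacetime satisfies the axiomatic hypotheses required there. There is nothing to add or correct.
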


\section{Equivalence of timelike sectional curvature bounds}
\label{section: equivalence_timelike_sec_curv_bounds}

In this section, we give the proof of the equivalence between smooth and synthetic timelike sectional curvature bounds in full detail. We also show by an explicit example that smooth timelike sectional curvature bounds are strictly weaker than the semi-Riemannian bounds described in \cite{alexander2008triangle}.

\subsection{From synthetic to smooth bounds}
\label{subsection: synthetictosmooth}

\begin{Theorem}
\label{Theorem: syntheticimpliessmooth}
Let $(M,g)$ be a strongly causal spacetime with synthetic timelike sectional curvature bounded below (resp.\ above) by $K\in\R$ in any of the equivalent senses of Definition \ref{Definition: synthetictimelikeseccurvbounds}. Then $(M,g)$ has smooth timelike sectional curvature bounded above (resp.\ below) by $K \in \R$ in the sense of Definition \ref{Definition: smoothseccurvbounds}.
\end{Theorem}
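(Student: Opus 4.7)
My plan is to reduce, via the equivalence of synthetic bounds in Theorem \ref{Theorem: Equivalenceofsyntheticbounds}, to the convexity/concavity condition (vi) of Definition \ref{Definition: synthetictimelikeseccurvbounds}, extract a pointwise Hessian estimate on $\tau(p,\cdot)$ from it, and then convert this estimate into a sectional curvature bound by a Jacobi field expansion as $L \to 0$. I will focus on the case ``synthetic upper bound by $K$ $\Rightarrow$ smooth lower bound by $K$''; the converse is symmetric.

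Fix $p \in M$ and a timelike $2$-plane $\Pi = \mathrm{span}(v, w) \subset T_p M$ with $\{v, w\}$ orthonormal ($v$ unit timelike future-directed, $w$ unit spacelike), so that $K(\Pi) = -\langle R(w,v)v, w\rangle$. For small $L > 0$, set $q_L := \exp_p(Lv)$ and let $W_L \in T_{q_L} M$ be the parallel transport of $w$ along $\gamma(s) := \exp_p(sv)$; then $W_L$ is unit spacelike and orthogonal to $-\gamma'(L) = \nabla\tau(p, \cdot)|_{q_L}$. For small $\epsilon > 0$, set $u_\epsilon := (\gamma'(L) + \epsilon W_L)/\sqrt{1 - \epsilon^2}$ (unit timelike at $q_L$) and consider the timelike geodesic $\eta(t) := \exp_{q_L}(t u_\epsilon)$, defined on a small interval around $0$ with $\eta(t) \in I^+(p)$ throughout. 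Since $\tau(p, \cdot)$ is smooth there, the distributional inequality from (vi) becomes a classical pointwise inequality at $t = 0$. Using $\mathrm{Hess}\,\tau(\nabla\tau, \cdot) = 0$ and the identity $(\md^K_\tau)'' = 1 + K \md^K_\tau$, a direct algebraic rearrangement (the common factor $\epsilon^2$ cancels) yields the $\epsilon$-independent pointwise estimate
\begin{align*}
    \mathrm{Hess}\, \tau(p, \cdot)|_{q_L}(W_L, W_L) \leq -\frac{(\md^K_\tau)''(L)}{(\md^K_\tau)'(L)}.
\end{align*}

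To turn this into a sectional curvature bound I compute the left-hand side via Jacobi fields. Let $J$ be the Jacobi field along $\gamma$ with $J(0) = 0$ and $J(L) = W_L$. By the standard second-variation formula (readily checked in Minkowski space), $\mathrm{Hess}\, \tau|_{q_L}(W_L, W_L) = -\langle J'(L), J(L)\rangle$, and a Taylor expansion in $L$ using $J'' + R(J, \gamma')\gamma' = 0$ together with $J(0) = 0$, $J''(0) = 0$, $J'''(0) = -R(J'(0), v)v$ gives
\begin{align*}
    \mathrm{Hess}\, \tau|_{q_L}(W_L, W_L) = -\frac{1}{L} - \frac{L}{3} K(\Pi) + O(L^2).
\end{align*}
Direct Taylor expansion yields $-(\md^K_\tau)''(L)/(\md^K_\tau)'(L) = -1/L - KL/3 + O(L^3)$. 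Substituting both expansions into the Hessian estimate, cancelling $-1/L$, dividing by $L/3 > 0$ and flipping the sign, and letting $L \to 0^+$ produces $K(\Pi) \geq K$, as desired.

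The main obstacle is sign bookkeeping: the Lorentzian signature makes $\mathrm{Hess}\,\tau$ \emph{negative} in spacelike directions orthogonal to $\nabla\tau$ (unlike the Riemannian case), and this must be tracked carefully along with the convention mismatch between the synthetic and smooth bounds discussed after Remark \ref{Remark: onsmoothseccurvbounds}. Verifying that the coefficient of $L$ in the Jacobi field expansion is exactly $-K(\Pi)/3$, with the correct contraction of the curvature tensor, is the only genuinely computational step.
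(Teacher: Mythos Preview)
Your argument is correct and takes a genuinely different route from the paper's proof. The paper works with \emph{hinge comparison} (item (iv)): it builds a geodesic variation $h(s,t)=\exp_p(t(v+s(2v-w)))$ through future-directed timelike geodesics, applies hinge comparison to obtain $\sigdis{h(s,t)\gamma(t)}\le\sigdis{\tilde h(s,t)\tilde\gamma(t)}$ against the corresponding variation in $\mathbb{L}^2(K)$, divides by $s$ to get $\langle J(t),J(t)\rangle\le\langle\tilde J(t),\tilde J(t)\rangle$, and then reads off the curvature inequality from the $t^4$-coefficient. You instead use \emph{convexity/concavity} (item (vi)): testing $f(t)=\md^K_\tau(\tau(p,\eta(t)))$ along a timelike geodesic with direction $u_\epsilon=(\gamma'(L)+\epsilon W_L)/\sqrt{1-\epsilon^2}$ you extract directly the Hessian inequality \eqref{eq: Hessiancomp} at $q_L$, and then expand the Hessian via the Riccati/Jacobi expansion $A(L)=\tfrac{1}{L}\mathrm{Id}-\tfrac{L}{3}R(\cdot,v)v+O(L^2)$.

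Both arguments share the same essential trick---tilting the spacelike vector $w$ into the timelike cone so that the purely timelike synthetic hypothesis applies (the paper via $2v-w$, you via $u_\epsilon$)---and both end with a Taylor expansion. What your approach buys is a transparent converse to Theorem \ref{Theorem: Hessiancomparison}: it shows that condition (vi), restricted to timelike test geodesics, is \emph{exactly} the Hessian bound \eqref{eq: Hessiancomp} on the orthogonal complement of $\nabla\tau$. What the paper's approach buys is that it never needs the explicit model function $(\md^K_\tau)''/(\md^K_\tau)'$: the comparison is made \emph{object to object} (Jacobi field in $M$ versus Jacobi field in $\mathbb{L}^2(K)$), so the constant-curvature formula $\tilde R(u_1,u_2)u_3=K(\langle u_2,u_3\rangle u_1-\langle u_1,u_3\rangle u_2)$ does all the work at the end.
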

\begin{proof}
The proof follows ideas in \cite{alexander2008triangle}, but we have to modify the setup to use timelike hinge comparison. 
Let $p\in M$ and let $\Pi \subset T_pM$ be a timelike plane. Let $\{v,w\}$ be an orthonormal basis of $\Pi$, with $v$ timelike. Consider the future directed timelike unit speed geodesic $\gamma$ defined by the initial conditions $\gamma(0)=p$, $\gamma'(0)=v$. We extend it to a geodesic variation as follows: It is easy to check that $2v - w$ is a future directed timelike vector, and we define
\begin{equation*}
h(s,t) :=\exp_p(t(v+s(2v - w)))
\end{equation*}
Clearly, $h$ is a geodesic variation of $\gamma$ by future directed timelike geodesics, thus the variation field $J(t):=\partial_s|_{s=0}h(s,t)$ is a Jacobi field along $\gamma$ satisfying $J(0)=0$, $J'(0)=2v - w$. 

Similarly, in $\lm{K}$, consider a future directed timelike unit speed geodesic $\tilde{\gamma}$ with $\tilde{p}:=\tilde{\gamma}(0)$, $\tilde{v}:=\tilde{\gamma}'(0)$, and let $\tilde{w}\in T_{\tilde{p}}\lm{K}$ be a unit spacelike vector orthogonal to $\Tilde{v}$. We define the analogous geodesic variation in $\mathbb{L}^2(K)$ via
\begin{equation*}
\tilde{h}(s,t)=\exp_{\tilde{p}}(t(\tilde{v}+s(2 \tilde{v} - \tilde{w})))
\end{equation*}
As before, $\tilde{h}$ is a geodesic variation of $\tilde{\gamma}$ by future directed timelike geodesics, hence its variation field $\tilde{J}(t):=\partial_s|_{s=0}\tilde{h}(s,t)$ is a Jacobi field along $\tilde{\gamma}$ satisfying $\tilde{J}(0)=0$, $\tilde{J}'(0)=2\Tilde{v} - \tilde{w}$. Note that for each fixed $s$, $h(s,\cdot)$ and $\tilde{h}(s,\cdot)$ have the same (not necessarily unit) speed. Due to this, as well as the orthonormality of $\{v,w\}$ and $\{\tilde{v},\tilde{w}\}$, it is easy to see that $\ma_{\tilde{p}}(\tilde{\gamma},\tilde{h}(s,\cdot))=\ma_{p}(\gamma,h(s,\cdot))$ and thus $(\tilde{\gamma}|_{[0,a]},\tilde{h}(s,\cdot)|_{[0,b]})$ forms a comparison hinge to $(\gamma|_{[0,a]},h(s,\cdot)|_{[0,b]})$ (for any $a,b > 0$ such that the geodesics in question exist up to these parameters). Additionally, one may verify explicitly in $\mathbb{L}^2(K)$ that for small enough $s,t>0$ we have that $\tilde{h}(s,t)\ll\tilde{\gamma}(t)$.
We may thus apply hinge comparison to obtain
\begin{equation}\label{eq:hinge}
\tau(h(s,t),\gamma(t))\geq \bar{\tau}(\tilde{h}(s,t),\tilde{\gamma}(t))\quad (\text{resp.\ }\leq).
\end{equation}
By positivity of the right hand side also $h(s,t) \ll \gamma(t)$ if the curvature is bounded from below. 
In the case of curvature bounded above we cannot conclude $h(s,t) \ll \gamma(t)$, but the reverse triangle inequality guarantees $\gamma(t) \not \ll h(s,t)$: Indeed, we have 
\begin{equation*}
\underbrace{\bar{\tau}(\tilde{p},\tilde{\gamma}(t))}_{=\tau(p,\gamma(t))}\geq \underbrace{\bar{\tau}(\tilde{p},\tilde{h}(s,t))}_{=\tau(p,h(s,t))}+\underbrace{\bar{\tau}(\tilde{h}(s,t),\tilde{\gamma}(t))}_{>0}\,,
\end{equation*}
contradicting the reverse triangle inequality $\tau(p,h(s,t))\geq\tau(p,\gamma(t))+\tau(\gamma(t),h(s,t))$ which would need to hold if $\gamma(t)\ll h(s,t)$ were true.

As we know $\tilde{h}(s,t)\ll\tilde{\gamma}(t)$, and either $h(s,t) \ll \gamma(t)$ if the curvature bound is from below by $K$ or $\gamma(t)\not\ll h(s,t)$ if it is from above by $K$, we may formulate the above considerations via the signed distance as
\begin{equation*}
\sigdis{h(s,t)\gamma(t)}\leq \sigdis{\tilde{h}(s,t)\tilde{\gamma}(t)}\quad (\text{resp. }\geq).
\end{equation*}

This leads to the analogous inequality on the signed norm of the Jacobi fields:
\begin{equation*}
\sigdis{J(t)}=\lim_{s\searrow 0}\frac{\sigdis{\gamma(t)h(s,t)}}{s}\leq \lim_{s\searrow 0}\frac{\sigdis{\tilde{\gamma}(t)\tilde{h}(s,t)}}{s} = \sigdis{\tilde{J}(t)}\quad (\text{resp. }\geq)
\end{equation*}
and the same inequality for $\langle J(t),J(t)\rangle$ and $\langle \tilde{J}(t),\tilde{J}(t)\rangle $, i.e.\ $\langle J(t), J(t) \rangle \leq \langle \tilde{J}(t) , \tilde{J}(t) \rangle$.

Note that $J$ satisfies the following set of equalities:
\begin{align*}
J'(0)&=2v - w,\\
J''&=-R_{J, \gamma'}\gamma',\\
J''(0)&=0,\\
J'''&=-R_{J, \gamma'}\gamma'-R_{J', \gamma'}\gamma',\\
J'''(0)&=-R_{2v - w, v}v.
\end{align*}
The analogous equalities hold for $\tilde{J}$ in $\mathbb{L}^2(K)$.

If we develop the inequality $\langle J(t),J(t)\rangle \leq \langle \tilde{J}(t),\tilde{J}(t) \rangle $ in orders of $t$, all of the terms up to order $t^3$ agree, and the term of order $t^4$ yields the following inequality, where the Riemann curvature tensor $\tilde{R}$ on $\mathbb{L}^2(K)$ is of the form $\Tilde{R}(u_1,u_2)u_3 = K (\langle u_2,u_3 \rangle u_1 - \langle u_1, u_3 \rangle u_2)$ due to the constant sectional curvature:
\begin{align*}
\langle -R_{2v - w, v}v,2v - w \rangle &\leq \langle -\tilde{R}_{2 \tilde{v} - \tilde{w},\tilde{v}}\tilde{v},2 \tilde{v} - \tilde{w} \rangle \quad (\text{resp. }\geq)\\
&=-K (\langle \tilde{v},\tilde{v}\rangle \langle 2 \tilde{v} - \tilde{w}, 2 \tilde{v} - \tilde{w}\rangle - \langle 2 \tilde{v} - \tilde{w},\tilde{v} \rangle^2).
\end{align*}
Note that $\langle u_1,u_2\rangle^2 > \langle u_1, u_1 \rangle \langle u_2,u_2 \rangle$ for any pair of linearly independent timelike vectors by the reverse Cauchy-Schwarz inequality. Due to this and the fact that we may replace the products on the right hand side by the corresponding ones of $v,w$ in $T_pM$, 
we conclude
\begin{align*}
    \frac{\langle R_{2v - w, v}v,2v - w \rangle}{\langle v,v\rangle \langle 2v - w, 2 v - w\rangle - \langle 2 v - w,v \rangle^2} \leq K \quad (\text{resp. } \geq).
\end{align*}
Since $\{v,2v - w\}$ is a basis of $\Pi$, we conclude that the sectional curvature of $\Pi$ is bounded above (resp.\ below) by $K$, which gives the claim.
\end{proof}

\subsection{From smooth to synthetic bounds}
\label{subsection: smoothtosynthetic}

\begin{Theorem}
\label{Theorem: Smoothimpliessynthetic}
Let $(M,g)$ be a strongly causal spacetime with smooth timelike sectional curvature bounded above (below) by $K\in\R$ in the sense of Definition \ref{Definition: smoothseccurvbounds}. Then $(M,g)$ has synthetic timelike sectional curvature bounded below (above) by $K$ in any of the equivalent senses of Definition \ref{Definition: synthetictimelikeseccurvbounds}.
\end{Theorem}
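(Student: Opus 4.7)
The plan is to establish condition (vi) of Definition~\ref{Definition: synthetictimelikeseccurvbounds}---the convexity/concavity of the time separation along timelike geodesics---and then to invoke the equivalence in Theorem~\ref{Theorem: Equivalenceofsyntheticbounds} to deduce all the remaining synthetic bounds. The Hessian comparison in Theorem~\ref{Theorem: Hessiancomparison}(ii) is perfectly tailored for this: it provides a pointwise estimate on the Hessian of $\md^K_S\circ\sigdis{p\cdot}$ in every timelike direction, valid on all of a convex neighborhood.

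Fix $p \in M$ and a convex neighborhood $U$ of $p$ (shrunk so that $\sup_{x,y\in U}\tau(x,y) < D_K$ if $K<0$), and let $\gamma:[a,d]\to U$ be a unit-speed timelike geodesic. I take as the extension of the function $f$ from Definition~\ref{Definition: synthetictimelikeseccurvbounds}(vi) the function
\[
\tilde f(t) := -\md^K_S(\sigdis{p\gamma(t)}),
\]
defined on all of $[a,d]$. A case-by-case check from the definition of $\md^K_S$ confirms $\tilde f = f$ on the natural domain $D$. The first substantive task is to verify that $\tilde f$ is smooth on $[a,d]$, which is not obvious since $\sigdis{p\cdot}$ itself fails to be smooth across the null cone of $p$. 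This is resolved by the algebraic observation that $\md^K_\tau$ is even in its argument; a short power-series computation then shows
\[
\md^K_S(\sigdis{pq}) \;=\; \phi_{-K}(\sigma_g(p,q)), \qquad \phi_{-K}(s) := \sum_{n\ge 1}\frac{(-K)^{n-1}\, s^{n}}{(2n)!},
\]
where $\sigma_g(p,q):=\langle\exp_p^{-1}(q),\exp_p^{-1}(q)\rangle$ is smooth on $U$. Hence $\tilde f$ is smooth on $[a,d]$.

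With smoothness in hand, the remaining computation is direct: since $\gamma$ is a geodesic, the chain rule together with $\nabla_{\gamma'}\gamma'=0$ gives $\tilde f''(t) = -\mathrm{Hess}\,\md^K_S(\sigdis{p\cdot})(\gamma'(t),\gamma'(t))$. Applying Theorem~\ref{Theorem: Hessiancomparison}(ii) with $v=\gamma'(t)$ and using $\langle\gamma',\gamma'\rangle = -1$ yields the classical pointwise inequality $\tilde f''(t) - K\tilde f(t) \geq 1$ (resp.\ $\leq$) for every $t \in [a,d]$, strictly stronger than the distributional version demanded in condition (vi). Theorem~\ref{Theorem: Equivalenceofsyntheticbounds} then delivers all of the remaining equivalent synthetic bounds. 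The main obstacle is the smoothness of $\tilde f$ across the null cone of $p$; once this is handled by the analyticity observation above, every other step is a routine application of Hessian comparison and the equivalence Theorem.
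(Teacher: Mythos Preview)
Your proof is correct and follows essentially the same approach as the paper: both verify condition (vi) of Definition~\ref{Definition: synthetictimelikeseccurvbounds} by defining the extension via $\md^K_S(\sigdis{p\gamma(t)})$, establishing its smoothness through the observation that it is an analytic function of $\langle\exp_p^{-1}(\gamma(t)),\exp_p^{-1}(\gamma(t))\rangle$, and then applying the Hessian comparison of Theorem~\ref{Theorem: Hessiancomparison}(ii) with $\langle\gamma',\gamma'\rangle=-1$ to obtain the differential inequality. The only cosmetic differences are that you work with $\tilde f=-\md^K_S(\sigdis{p\cdot})$ (the paper's $-f$) and that you write out the power series $\phi_{-K}$ explicitly, whereas the paper simply cites \cite[Eq.~(1.6)]{alexander2008triangle} for the same fact.
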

\begin{proof}
We verify (vi) in Definition \ref{Definition: synthetictimelikeseccurvbounds}. The proof is a consequence of Hessian comparison for the signed distance, cf.\ Theorem \ref{Theorem: Hessiancomparison}(ii). For definiteness, suppose that the smooth timelike sectional curvature is bounded above by $K$. Fix $p \in M$ and let $U$ be a convex neighborhood of $p$. Let $\gamma:[0,b] \to U$ be the unique $\tau$-arclength parametrized timelike maximizer from $q_-:=\gamma(0)$ to $q_+:=\gamma(b)$. 

Consider the function $f(t):=\md^K_S(\sigdis{p\gamma(t)})$, which is smooth on $[0,b]$: Indeed it can be written as a convergent power series in $\langle \exp_p^{-1}(\gamma(t)), \exp_p^{-1}(\gamma(t)) \rangle$, cf.\ \cite[Eq.\ (1.6)]{alexander2008triangle}.
Since $-f$ is an extension of the function in Definition \ref{Definition: synthetictimelikeseccurvbounds} (vi), we are done if we can show that $f'' - Kf \leq -1$. We conclude our desired result from Theorem \ref{Theorem: Hessiancomparison}(ii):
\begin{align*}
f''(t)&=\langle\mathrm{Hess} \, \md^K_S(\sigdis{p\cdot})(\gamma'(t)),\gamma'(t)\rangle\\
&\leq (1-K\md^K_S(\sigdis{p\gamma(t)}))\underbrace{\langle\gamma'(t),\gamma'(t)\rangle}_{=-1}\\
&=-1+Kf\,.
\end{align*}
In the case of lower timelike sectional curvature bounds, Hessian comparison goes in the other direction and the remaining calculations can be done in the same way.
\end{proof}

\begin{Remark}[Previous work]
\begin{enumerate}
    \item[]
    \item In \cite[Thm.\ 3.4]{harris1982triangle}, the author proves (in the language we use) that if $(M,g)$ is a globally hyperbolic (resp.\ order ($\sqrt{-K}$)-globally hyperbolic) spacetime with smooth timelike sectional curvature bounded above by $K \in \R$, then it satisfies a global synthetic timelike sectional curvature bound below by $K \in \R$ in the sense of hinge comparison. The restriction on sidelengths required there is included in the notion of size bounds: configurations so large that comparison configurations in $\mathbb{L}^2(K)$ do not exist are not considered. We obtain this result from Theorem \ref{Theorem: Smoothimpliessynthetic} and the globalization Theorem \ref{Theorem: Globalization}(ii).
    \item In \cite{alexander2008triangle}, the authors prove compatibility of smooth and synthetic sectional curvature bounds for general semi-Riemannian manifolds by studying a modified distance function and the corresponding Riccati comparison theory for its Hessian. However, the sectional curvature bounds they use also include bounds for spacelike planes, and their Jacobi field comparison methods use spacelike distances, so our purely Lorentzian results are not a consequence of their semi-Riemannian ones.
\end{enumerate}
\end{Remark}

\subsection{Timelike sectional curvature bounds and non-timelike planes}
\label{subsection: timelikeboundsandnontimelikeplanes}

We now show via an explicit example that smooth timelike sectional curvature bounds are strictly weaker than the semi-Riemannian sectional curvature bounds introduced in \cite{alexander2008triangle}. To this end, let $(M,g)$ be an FLRW spacetime satisfying the Einstein equations for the perfect fluid with energy density $\rho$ and pressure $p$. Then due to the isotropy of spatial slices, the sectional curvature of an arbitrary timelike $2$-plane $\Pi$ can be expressed as (see \cite[Sec.\ 3]{galloway2018existence})
\begin{align*}
    K(\Pi) = -\frac{8 \pi}{3} \left(C \frac{3(\rho + p)}{2} - \rho\right),
\end{align*}
where $C = C(\Pi) \geq 1$ is some constant (which can be arbitrarily large). Let now $\rho < 0$ and $p:=-\tilde{C}\rho$ (for some fixed constant $\tilde{C} \geq 1$), then $K(\Pi) \leq 0$. By arbitrariness of $\Pi$, $(M,g)$ thus has a smooth timelike sectional curvature bound from above by $0$. However, if $e_i$ ($i=1,2,3$) denotes an orthonormal basis of the spatial slice, then $\tilde{\Pi}:=\mathrm{span}(e_i,e_j)$ is a spacelike $2$-plane, and (see again \cite[Sec.\ 3]{galloway2018existence})
\begin{align*}
    K(\tilde{\Pi}) = \frac{8 \pi}{3} \rho < 0.
\end{align*}
We conclude that $(M,g)$ has smooth timelike sectional curvature bounded above by $0$, but it does not have sectional curvature bounded below by $0$ in the sense of \cite{alexander2008triangle}. For an explicit example of an FLRW spacetime with $\rho < 0$ and $p= - \rho$, let $f$ be a solution of the initial value problem $f'' f - f'^2 - 2 = 0$, $f(0) = 1$, $f'(0) = 1/4$. Let $(-\varepsilon,\varepsilon)$ be such that $f'(t) > 0$ and $|f'(t)| \leq 1/2$ for all $t \in (-\varepsilon,\varepsilon)$ and consider the FLRW spacetime $(-\varepsilon,\varepsilon) \times_f H^3$. It is then easy to check (see \cite[Thm.\ 12.11]{o1983semi}) that $\rho < 0$ and $p=-\rho$. Note that our example is not in contradiction to the equivalence of the semi-Riemannian bounds for some nonpositive constant (not necessarily $0$, as the example shows) in the sense of \cite{alexander2008triangle} and the strong energy condition on FLRW spacetimes \cite[Ex.\ 7.3]{alexander2008triangle}. The latter, in turn, is known to be equivalent to a smooth timelike sectional curvature bound from above by $0$ \cite[Sec.\ 3]{galloway2018existence}. We conclude that, on FLRW spacetimes, the strong energy condition is equivalent to a smooth timelike sectional curvature bound from above by $0$ and a lower bound by some nonpositive (not necessarily $0$) constant on the sectional curvatures of spacelike tangent $2$-planes.
%It is well known that the strong energy condition for $(M,g)$ is equivalent to a timelike sectional curvature bound above by $0$ in conjunction with a bound on spacelike sectional curvatures below by timelike sectional curvatures (see \cite[Exercise 10, p.\ 362]{o1983semi}). But since the sectional curvature on timelike planes is unbounded below, this does not imply any bounds below on spacelike sectional curvatures, thus the bound from \cite{alexander2008triangle} need not hold for any constant (and, in fact, does not hold if $\rho < 0$ is unbounded below). This contradicts a claim made in \cite[Ex.\ 7.3]{alexander2008triangle}.

%\todo{This is not as easy as it seems: The conditions on $\rho$ and $p$ determine an ODE on the warping function of FLRW. We really want $\rho < 0$, and need $p = -\tilde{C} \rho $ with $\tilde{C} \geq 1$. This results in the ODE (note: $\rho < 0$ is only possible if the spatial part is $H^3$, see O'Neill, Theorem 12.11)
%\begin{align*}
%    f'^2 - 1 = \frac{1}{3 \tilde{C}} (2 f'' f + f'^2 - 1).
%\end{align*}
%Say $\tilde{C} = 1$ for simplicity, certainly this ODE can be solved locally for some initial data, but can we get unboundedness of $\rho$ as well?}

\section{Conclusion \& outlook}\label{section: conclusion_outlook}

In this work, we have established the equivalence of smooth timelike sectional curvature bounds from above (resp.\ below) with (local) synthetic timelike curvature bounds below (above), see Theorems \ref{Theorem: syntheticimpliessmooth} and \ref{Theorem: Smoothimpliessynthetic}. We recalled that local synthetic bounds globalize under appropriate conditions, see Theorem \ref{Theorem: Globalization}, thus obtaining the hinge comparison result from \cite{harris1982triangle} as a consequence. As an essential tool, we utilized the Hessian comparison estimate for the time separation and signed distance functions, see Theorem \ref{Theorem: Hessiancomparison}. Finally, in Subsection \ref{subsection: timelikeboundsandnontimelikeplanes}, we saw that timelike sectional curvature bounds do not imply the corresponding semi-Riemannian sectional curvature bounds from \cite{alexander2008triangle}.

One immediate consequence of Theorem \ref{Theorem: Smoothimpliessynthetic} is a relaxation in the assumptions of the Lorentzian Reshetnyak gluing theorem \cite[Thm.\ 5.2.1]{beran02}, which states that when gluing spacetimes with upper curvature bounds in the sense of \cite{alexander2008triangle} one obtains a Lorentzian pre-length space with upper curvature bounds in the sense of triangle comparison. In fact, due to results from \cite{rott2022gluing}, this can be improved even further, one even obtains a strongly causal Lorentzian length space. We refer to \cite{beran02} for notions below not introduced in this article.

\begin{Theorem}[Improved version of the Lorentzian Reshetnyak gluing theorem]\label{thm:improvedReshetnyak}
Let $(M_1,g_1)$ and $(M_2,g_2)$ be strongly causal spacetimes with $\dim(M_1)=:n \geq m := \dim(M_2) \geq 2$. 
Let $A_1$ and $A_2$ be two closed non-timelike locally isolating subsets of $M_1$ and $M_2$, respectively. 
Let $f:A_1 \to A_2$ be a $\tau$-preserving and $\leq$-preserving locally bi-Lipschitz homeomorphism which locally preserves the signed distance. Suppose $A_1$ and $A_2$ are convex in the sense of \cite[Remark 5.1.1(iii)]{beran02}. If $M_1$ and $M_2$ have smooth timelike sectional curvature bounded \emph{below}\footnote{Recall that due to the convention of reversing the curvature inequality for timelike planes, it is now justified to talk about lower timelike curvature bounds, as this is the correct inequality corresponding to upper sectional curvature bounds in the sense of \cite{alexander2008triangle}.} by $K \in \R$, then the Lorentzian amalgamation $M_1 \sqcup_A M_2$ is a \emph{strongly causal Lorentzian length space} with timelike curvature bounded above by $K$. 
\end{Theorem}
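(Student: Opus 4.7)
The plan is to reduce the claim to the already-proved Reshetnyak-type gluing statements by converting the smooth timelike curvature hypothesis into a purely synthetic one, and then invoking the gluing machinery in the form in which it is actually needed.

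First, I would apply Theorem \ref{Theorem: Smoothimpliessynthetic} separately to $(M_1,g_1)$ and $(M_2,g_2)$. By assumption, both spacetimes have smooth timelike sectional curvature bounded below by $K$, so the theorem yields that each is a strongly causal Lorentzian pre-length (in fact length) space with synthetic timelike sectional curvature bounded above by $K$ in any of the equivalent senses of Definition \ref{Definition: synthetictimelikeseccurvbounds}, in particular in the sense of timelike triangle comparison. This is the precise input that the Reshetnyak-type gluing arguments actually use.

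Next, I would revisit \cite[Thm.\ 5.2.1]{beran02}. Although that result is stated under the assumption of upper curvature bounds in the sense of \cite{alexander2008triangle}, its proof only invokes this hypothesis through its synthetic consequence, namely (local) timelike triangle comparison against $\mathbb{L}^2(K)$. The rest of the argument is purely synthetic and manipulates only the time separation, the gluing identification $f$, the non-timelike locally isolating and convexity properties of $A_1,A_2$, and the $\tau$-preserving and signed-distance-preserving character of $f$. Hence, replacing the Alexander--Bishop hypothesis by the weaker (but sufficient) synthetic upper bound established in the previous paragraph, the same proof goes through and produces a Lorentzian pre-length space $M_1 \sqcup_A M_2$ with timelike curvature bounded above by $K$ in the triangle comparison sense.

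Finally, I would upgrade from a pre-length space to a \emph{strongly causal Lorentzian length space} by invoking the gluing results of \cite{rott2022gluing}. Under the hypotheses on $A_i$ (closed, non-timelike, locally isolating, convex) and on $f$ (locally bi-Lipschitz $\tau$- and $\leq$-preserving homeomorphism preserving the signed distance), those results give that the amalgamation inherits strong causality and local length-space structure from the factors $(M_i,g_i)$, both of which are strongly causal spacetimes and hence strongly causal Lorentzian length spaces. Combining this with the synthetic upper bound obtained in the previous step yields the conclusion.

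The main conceptual obstacle is not a computation but the bookkeeping in the second step: one must verify that the proof of \cite[Thm.\ 5.2.1]{beran02} really depends only on the synthetic timelike triangle comparison property and not on the semi-Riemannian bound of \cite{alexander2008triangle} as a whole (e.g.\ on curvature information about spacelike planes). Once this inspection is carried out, Theorem \ref{Theorem: Smoothimpliessynthetic} is precisely what is needed to replace the stronger hypothesis by the timelike one, and the remaining causality-theoretic upgrade is a direct appeal to \cite{rott2022gluing}.
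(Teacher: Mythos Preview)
Your overall architecture matches the paper's, but there is a genuine gap in the second step. You assert that the proof of \cite[Thm.\ 5.2.1]{beran02} uses the Alexander--Bishop hypothesis only through \emph{timelike} triangle comparison. That is not quite true: inspecting the proof, the only place where the curvature bound enters is via \cite[Lem.\ 4.3.3]{beran02}, and that lemma requires a one-sided triangle comparison for triangles with one \emph{spacelike} side --- i.e.\ a spacelike geodesic $\gamma$ from $p_2$ to $p_3$ lying in $I^+(p_1)$ (or $I^-(p_1)$), comparing $\tau(p_1,\gamma(t))$ to the corresponding quantity in a comparison triangle. This is not one of the synthetic notions in Definition \ref{Definition: synthetictimelikeseccurvbounds}, so it does not follow from Theorem \ref{Theorem: Smoothimpliessynthetic} alone.

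The paper closes this gap by a direct computation using the Hessian comparison estimate of Theorem \ref{Theorem: Hessiancomparison}(i) (not (ii), which only handles timelike $v$): for $f(t)=\md^K_S(\sigdis{p\gamma(t)})$ with $\gamma$ unit-speed spacelike in $I^+(p)$, one gets $f''\leq (1-Kf)\langle\gamma',\gamma'\rangle = 1-Kf$, and from this differential inequality the required one-sided comparison follows as in \cite[Thm.\ 5.2.4]{Kir18}. So your ``bookkeeping'' obstacle is real, and resolving it needs the Hessian comparison for arbitrary (not just timelike) vectors on $I^+(p)$, which is exactly why the paper proved that stronger version in Theorem \ref{Theorem: Hessiancomparison}(i). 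Once that missing comparison is supplied, the remainder of your outline (Theorem \ref{Theorem: Smoothimpliessynthetic} for the timelike part, then \cite[Thm.\ 5.2.1]{beran02}, then \cite{rott2022gluing} for the upgrade to a strongly causal Lorentzian length space) matches the paper.
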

\begin{proof}
Due to Theorem \ref{Theorem: Smoothimpliessynthetic} and \cite[Prop.\ 3.5, Ex.\ 3.24, Thm.\ 3.26 \& Ex.\ 4.9]{kunzinger2018lorentzian}, $M_1$ and $M_2$ are strongly causal regular (SR)-localizable Lorentzian length spaces with timelike curvature bounded above by $K$. We want to apply \cite[Thm.\ 5.2.1]{beran02}, then \cite[Thm.\ 4.9 \& Thm.\ 5.11(iv)]{rott2022gluing} yield the claim. The proof of \cite[Thm.\ 5.2.1]{beran02} only uses non-timelike curvature bounds in its reference to \cite[Lem.\ 4.3.3]{beran02}, whose proof is analogous to that of \cite[Lem.\ 4.3.1]{beran02}, except that it uses one sided triangle comparison in triangles with one spacelike side, i.e.\ a spacelike geodesic $\gamma$ from $p_2$ to $p_3$ in the timelike future (or past) of a point $p_1$, and comparing $\tau(p_1,\gamma(t))$ to the corresponding length in a comparison triangle for $p_1p_2p_3$.

So we only need to prove this triangle comparison. This works via Theorem \ref{Theorem: Hessiancomparison}(i): consider $f(t)=\md^K_S(\sigdis{p\gamma(t)})=-\md^K_\tau(\tau(p,\gamma(t)))$, then 
\begin{equation*}
f''(t)=\langle\mathrm{Hess} \, \md^K_S(\sigdis{p\cdot}) (\gamma'(t)),\gamma'(t)\rangle \leq (1-Kf)\underbrace{\langle\gamma'(t),\gamma'(t)\rangle}_{=1}
\end{equation*}

This corresponds to \cite[Eq.\ (5.6)]{Kir18} (note that $h=f$ here), and from there on, we can follow the rest of the proof of \cite[Thm.\ 5.2.4]{Kir18}.
\end{proof}

Even in the setting of smooth spacetimes with timelike sectional curvature bounds, there still remain interesting open questions, many of which would be even more impactful under timelike Ricci bounds due to the connection to General Relativity, but are more reasonably achievable under the stronger sectional curvature condition. An example would be the analogue of Cheeger--Colding's quantitative almost splitting result \cite{cheeger1996lower}, a proof of which would also indicate the ``correct" notion of Lorentzian Gromov--Hausdorff convergence, which is a very present topic of conversation in the research of Lorentzian synthetic spaces. Efforts in this direction are currently under way. Another interesting question is whether one can find the \textit{soul} of a spacetime under timelike sectional curvature bounds, similar to the famous Cheeger--Gromoll result \cite{cheeger1972structure}.

%\newpage
\section*{Acknowledgments}
This research was funded in part by the Austrian Science Fund (FWF) [Grant DOI \linebreak \href{https://doi.org/10.55776/PAT1996423}{10.55776/PAT1996423}
and \href{https://doi.org/10.55776/P33594}{10.55776/P33594}]. For open access purposes, the authors have applied a CC BY public copyright license to any author accepted manuscript version arising from this submission. Argam Ohanyan is also supported by the ÖAW-DOC scholarship of the Austrian Academy of Sciences. The authors would like to thank the anonymous referee for their careful reading of the article and their suggestions.

\addcontentsline{toc}{section}{References}
%\bibliography{Bibliography} 
%\bibliographystyle{abbrv}
%\bibliographystyle{alpha}

\end{document}